\newcommand{\Th}{\mathrm{\Theta}}
\newcommand{\Omeg}{\mathrm{\Omega}}
\newcommand{\Omic}{\mathrm{O}}
\newcommand{\pmin}{{p_\text{min}}}
\newcommand{\pmax}{{p_\text{max}}}
\newcommand{\fmin}{{f_\text{min}}}
\newcommand{\fmax}{{f_\text{max}}}
\newcommand{\N}{{\mathbb{N}}}
\newcommand{\Rp}{{\mathbb{R}^+}}
\newcommand{\cone}{\text{cone}}
\newcommand{\eps}{\epsilon}
\newcommand{\calF}{\mathcal{F}}
\newcommand{\calG}{\mathcal{G}}
\newcommand{\calS}{\mathcal{S}}
\def\<{\langle}
\def\>{\rangle}
\newcommand{\emailhref}[1]{\href{mailto:#1}{\tt #1}}
\begin{document}

\title{Complexity-Approximation Trade-offs in Exchange Mechanisms: AMMs vs. LOBs}

\author{
 	    \textbf{Jason Milionis} \\
        \small Department of Computer Science \\
 		\small Columbia University \\
 		\small \emailhref{jm@cs.columbia.edu}
 		\and
 		\textbf{Ciamac C. Moallemi} \\
        \small Graduate School of Business \\
 		\small Columbia University \\
 		\small \emailhref{ciamac@gsb.columbia.edu}\\
 		\and
 		\textbf{Tim Roughgarden} \\
        \small Department of Computer Science \\
 		\small Columbia University \\ \small a16z Crypto \\
 		\small \emailhref{tim.roughgarden@gmail.com}
}
\date{Initial version: September 26, 2022 \\
      Current version: April 19, 2023
}
\maketitle

\thispagestyle{empty}

\begin{abstract}
This paper presents a general framework for the design and analysis of exchange mechanisms between two assets that unifies and enables comparisons between the two dominant paradigms for exchange, constant function market markers (CFMMs) and limit order books (LOBs). In our framework, each liquidity provider (LP) submits to the exchange a downward-sloping demand curve, specifying the quantity of the risky asset it wishes to hold at each price; the exchange buys and sells the risky asset so as to satisfy the aggregate submitted demand. In general, such a mechanism is budget-balanced (i.e., it stays solvent and does not make or lose money) and enables price discovery (i.e., arbitrageurs are incentivized to trade until the exchange's price matches the external market price of the risky asset). Different exchange mechanisms correspond to different restrictions on the set of acceptable demand curves.

The primary goal of this paper is to formalize an approximation-complexity trade-off that pervades the design of exchange mechanisms. For example, CFMMs give up expressiveness in favor of simplicity: the aggregate demand curve of the LPs can be described using constant space (the liquidity parameter), but most demand curves cannot be well approximated by any function in the corresponding single-dimensional family. LOBs, intuitively, make the opposite trade-off: any downward-slowing demand curve can be well approximated by a collection of limit orders, but the space needed to describe the state of a LOB can be large.

This paper introduces a general measure of {\em exchange complexity}, defined by the minimal set of basis functions that generate, through their conical hull, all of the demand functions allowed by an exchange. With this complexity measure in place, we investigate the design of {\em optimally expressive} exchange mechanisms, meaning the lowest complexity mechanisms that allow for arbitrary downward-sloping demand curves to be approximated to within a given level of precision. Our results quantify the fundamental trade-off between simplicity and expressivity in exchange mechanisms.

As a case study, we interpret the complexity-approximation trade-offs in the widely-used Uniswap v3 AMM through the lens of our framework.
\end{abstract}

\section{Introduction}

\newcommand{\nume}{num\'eraire}

Decentralized exchanges are now an integral part of the broader ecosystem of blockchains, as
evidenced by their ever growing volume of transactions \parencite{kaiko_crypto_2022}.
On model centralized exchanges,
the exchange of a risky asset for a \nume is typically carried out by an exchange mechanism
known as an electronic limit order book (LOB), in which market participants specify quantities of
shares of the risky asset they would like to trade at specified prices. Trades then occur as
orders are matched in a greedy way: whenever there is overlap between bid and ask prices (i.e.,
between a buy and a sell), a trade is executed, and the matched orders are cleared from the LOB.
LOBs therefore maintain and update a list of all the currently outstanding buy and sell orders.

LOBs face two types of challenges in an decentralized environment such as the Ethereum blockchain.
First, because storage and computation in such an environment tend to be so scarce, implementing
an LOB can be prohibitively expensive.  Second, LOBs are well known suffer from liquidity problems
in thin markets (markets with few buyers or sellers), for example, for ``long-tail'' crypto assets.

These challenges have motivated
an alternative exchange design that has become very widely used in
blockchains:
automated market makers (AMMs) and, in particular, constant
function market makers (CFMMs).  Uniswap
\parencite{adams2021uniswap,adams2020uniswap} is the most well known and
widely used example of a CFMM.

AMMs address the second challenge above by
offering guaranteed liquidity, meaning at all times
there is a spot price between $0$ and $\infty$ at which the AMM is
willing to buy or sell.
AMMs like Uniswap address the first challenge by
using only simple calculations and data structures. For example,
for the canonical (``$xy=k$'') constant product market maker,
the state of mechanism can be described by two numbers (the
quantities~$x$ and~$y$ held by the pool), and there is a simple
closed-form formula (requiring only a small number of additions,
multiplications, divisions, and square roots) for computing the quantity of the
risky asset received in exchange for a specified amount of the num\'eraire
(as a function of $x$ and $y$).

In this paper, we provide a general framework for describing and
reasoning about exchange mechanisms, which enables
``apples-to-apples'' comparisons between LOBs and AMMs on metrics such
as complexity and expressiveness.
More specifically, our contributions can be delineated as follows:
\begin{enumerate}
\item We provide a \textbf{common framework} for describing exchange mechanisms that encompasses
  both CFMMs and LOBs. In our general model, liquidity providers (LPs) submit to the exchange
  their preferences (in the form of what we define as \textbf{demand curves} for the risky asset)
  along with appropriate deposits of the risky asset and num\'eraire (see \Cref{sec:model} for details).
\item We formalize the sense in which some methods of exchange are simpler than others,
  introducing a general notion of \textbf{exchange complexity}.  Exchange complexity is defined by
  the minimal set of basis functions that generate, through their conical hull, all of the demand
  functions allowed by an exchange.  We classify the complexity of all the prominent types of
  exchange mechanisms
(see \Cref{sec:complex_n_ex} for details).
\item We characterize the \textbf{fundamental trade-off} between
  the \emph{complexity} of an exchange (in a sense that we define) %
  and %
  the \emph{expressibility} of an exchange as measured by its ability
  to approximate arbitrary preferences of the LPs (i.e., arbitrary
  demand curves). In particular, we prove matching (up to constant
  factors) upper and lower
  bounds
  on the minimum exchange complexity necessary to attain
  a specified approximation error (see \Cref{sec:tradeoffs} for details).
\item As a case study, we interpret the complexity-approximation trade-offs in the widely-used Uniswap v3 AMM through the lens of our framework
  (see \Cref{sec:v3} for details).
\end{enumerate}

\subsection{Literature Review}
\label{subsec:litrev}

The use of AMMs for decentralized exchange mechanisms was first
proposed by \textcite{vbuterin_lets_2016} and \textcite{lu_building_2017}.
The latter authors suggested a constant product market maker, which
was first analyzed by \textcite{angeris2019analysis}.
\textcite{angeris2021replicatingmarketmakers,angeris2021replicatingmonotonicpayoffs}
define and use a reparameterization of a CFMM curve (established by
\textcite{angeris2020improved}) in terms of portfolio holdings of the
pool with respect to the price as a tool to replicate payoffs and
compute the pool's value function; we use this same reparameterization
for different purposes, to define a general (i.e.,  not AMM-specific)
framework of exchange
and identify fundamental complexity-approximation trade-offs in
exchange design.

A separate line of work seeks to design specific CFMMs with good
properties by identifying good bonding functions, %
variations and
combinations of CFMMs in a dynamic setting with a specific focus
on optimizing fees, and minimizing arbitrage and slippage
\parencite{angeris2022optimalrouting,engel2021composing,engel2021presentation,port2022mixing,wu2022constant,ciampi2022fairmm,forgy2021family,krishnamachari2021dynAMMs,krishnamachari2,lambdaCFMMs,jensen2021homogenous}.
While fees could be easily integrated into our model, they
have no bearing on complexity-approximation trade-offs and thus we
generally ignore them in this paper for simplicity.

Some previous papers propose generalizations of CFMMs to somewhat
wider classes of exchanges~\parencite{bichuch2022axioms,xu2021sok} without
considering LOBs.

CFMMs and LOBs have been compared before (in ways orthogonal to the
questions studied here)
\parencite{barbon2021quality,lehar2021decentralized,capponi2021adoption}.
Most of these works either compare
the observed liquidities and the price efficiency of these mechanisms
\parencite{lehar2021decentralized,capponi2021adoption} or study the same
through the lens of arbitrage bounds \parencite{barbon2021quality}.
\textcite{young2020equivalence} argues that AMMs can be interpreted as
``smooth order books'' and notes a type of non-uniform converse (with each
possible state of a smooth order book represented using a different AMM).
\textcite{chitra2021liveness} compare CFMMs and LOBs in terms of the
number of arbitrage transactions necessary to recover from a liveness
attack on the underlying blockchain.

Another line of work analyzes competition between CFMMs and LOBs and
the consequent liquidity properties of both at equilibrium
\parencite{aoyagi2020liquidity,aoyagi2021coexisting,capponi2021adoption}.
\textcite{goyal2022batch} consider the computational
complexity
of computing such equilibria.

There is a large literature on the market microstructure of limit
order books; see the textbook by \textcite{ohara} and references therein.
There are some examples of on-chain LOBs on high-throughput blockchains
\parencite{solanaLOB,moosavi2021lissy}.

Finally, \textcite{adams2021uniswap} suggest that Uniswap v3's key feature
is that ``LPs can approximate any desired distribution of liquidity on
the price space,'' with empirical backing provided by
\textcite{huynh2022providing}; one application of our work is to put this
intuition on sound mathematical footing.
There is also work on Uniswap v3 from the LP perspective, such as how
beliefs about future prices should guide the choice of an LP's demand
curve
\parencite{fan2022differential,yin2021liquidity,neuder2021strategic}.

\section{Model}
\label{sec:model}

\subsection{Model Primitives}
\label{subsec:model_prim}

We begin by describing our framework for exchange design.
While this paper uses this framework specifically to study fundamental
complexity-approximation trade-offs in exchange mechanisms, we believe
it can serve also as a starting point for many future investigations.

Suppose there are two assets, a risky asset and a num\'eraire asset.  Each LP comes separately to
the exchange, and declares the amount of risky asset they would like to hold at each possible
price $p$, i.e., a non-increasing, non-negative function $g_i \colon (0,\infty) \to \Rp$. We
call the function $g_i(\cdot)$ the $i$th LP's \textbf{demand curve} for the risky asset, because
it refers to the demand of the LP for the risky asset (i.e., we are considering the perspective of
the LP).  Assuming that the current price is $p_0$, the LP simultaneously deposits a quantity
$g_i(p_0)$ of the risky asset in the common pool, along with an amount of num\'eraire given by the
Riemann–Stieltjes integral
\begin{equation}
\label{eq:lp_numeraire_deposit}
-\int_0^{p_0} p \, dg_i(p)
\,.
\end{equation}
Note that this integral is well-defined (though possibly infinite) since $g_i(\cdot)$ is
monotonic. Moreover, the integral is non-negative since $g_i(\cdot)$ is non-increasing.  In cases
where $g_i(p)$ is differentiable, the differential takes the form $dg_i(p) = g_i'(p) \, dp$.  We
will show later that this deposit of num\'eraire is necessary and sufficient for the exchange to
be budget-balanced or solvent, i.e., the exchange system does not extend credit.

The exchange mechanism maintains the demand curves of the LPs, along with the current price $p_0$.
Assuming that $n$ liquidity providers have contributed to the exchange their demand curves along
with respective payments of risky asset and num\'eraire, the \emph{aggregate demand curve} (i.e.,
the total quantity of risky asset that the exchange will hold at any given price) is given by the
non-increasing function
\begin{equation}
\label{eq:g}
g(p) = \sum_{i=1}^n g_i(p)
\,.
\end{equation}
Addition and removal of liquidity (LP ``mints'' and ``burns'', as they are known in
practice) simply occur through additions and removals of particular $g_i$'s to the aggregate
demand curve of the exchange.  These demand curves of the LPs can arise through
\emph{bonding curves} of traditional CFMMs (i.e., functions $f$ such that the holdings of the
joint pool $(x,y)$ satisfy $f(x,y)=c$ for some $c$) but this is not necessary; i.e., the exchange
mechanisms defined by our framework strictly generalize AMMs.

\paragraph{Trading} A liquidity demanding trader who wants to trade with the exchange will do so
by specifying a target (new) price $p_1\ne p_0$. The trader gets a quantity $g(p_0) - g(p_1)$ of
risky asset, and pays the following amount in num\'eraire:
\begin{equation}
\label{eq:trade_numeraire}
-\int_{p_0}^{p_1} p \, dg(p)
\,,
\end{equation}
as determined by the aggregate liquidity of the exchange $g(p)$ of \Cref{eq:g}. As was the case
for \Cref{eq:lp_numeraire_deposit}, this integral is well-defined, it is non-negative if $p_1 \geq
p_0$, and non-positive if $p_1 \leq p_0$.

\paragraph{Uniswap v2 example} To give a simple example, the particular case of a constant product
market maker (CPMM), such as Uniswap v2, arises from our mechanism as follows: restrict the set of
allowable demand curves $g_i$ that an LP may submit to the form
\[
g_i(p)=\frac{c_i}{\sqrt{p}}
\,,
\]
for some constant $c_i>0$. Then, the aggregate demand curve of the exchange will be of the form
\[
g(p)=\sum_{i=1}^n g_i(p)=\frac{c}{\sqrt{p}}
\,,
\]
for $c=\sum_{i=1}^n c_i > 0$. A trader who will trade with this exchange at a current price $p_0$ with a target price $p_1$ (or equivalently, with a specific quantity of risky asset to be purchased, since there a one-to-one correspondence) will obtain a quantity $g(p_0) - g(p_1) = c\left( \frac{1}{\sqrt{p_0}} - \frac{1}{\sqrt{p_1}} \right)$ of risky asset, and pay in num\'eraire
\[
- \int_{p_0}^{p_1} pg'(p)\, dp
= \int_{p_0}^{p_1} \frac{c}{2\sqrt{p}} \, dp
= c \left( \sqrt{p_1} - \sqrt{p_0} \right)
\,.
\]
Comparing this to the same expressions for an ``$xy=k$'' CPMM, the
trader gets exactly the same quantity of risky asset and pays exactly
the same amount of num\'eraire as they would in the ``$xy=k$'' CPMM,
with $k=c^2$.  Essentially, the curve $g(p)$ above is just a
reparameterization of the CPMM curve $xy=k$ in terms of prices
\parencite{angeris2021replicatingmonotonicpayoffs} where the risky asset
is available in quantity $x$ in the pool and the amount of num\'eraire
is $y$\footnote{In particular, $x=g(p)=c/\sqrt{p}$ and $y=c\sqrt{p}$ at all times in the pool for the corresponding defined price $p$.}.

\paragraph{Significance of LPs' demand curves} In this mechanism, we
view the individual demand curves chosen by the LPs as their
\emph{ideal preferences} with respect to risky asset holdings at each
price in regards to their market making activity.
They are in some sense ``forced'' to make the
market ---this is tautologically the reason that they participate in
the exchange as LPs\footnote{Note that LPs may also hold other portfolios of the risky asset, which of course need not be restricted to be non-increasing in the asset price, but their individual demand curves when they participating in an exchange mechanism need to reflect exactly and only the activity of making the market.}--- but \emph{exactly how} they
do this is
specified by
the shape of their demand curves.  The
requirement that each $g_i$ be non-increasing can be explained through
this argument: each demand curve of any LP has to always correspond to
making the market; as the price of the risky asset increases, a market
maker may only decrease their holdings of the asset (i.e., sell the asset), because if at any given price their holdings as defined in the exchange mechanism marginally increased (i.e., the LP would buy the risky asset at the marginal price), then any trader would sweep such a marginal quantity as it is to their advantage.

\subsection{Price Discovery and Budget Balance}
\label{subsec:model_props}

In the previous section, we defined a framework for an exchange mechanism. In order for an
exchange to be reasonable, two properties would be necessary: (1) price discovery should occur,
i.e., given an outside market with a fixed external market price, the exchange's price should
eventually become identical to the market price; and (2) the exchange should at no point in time
become insolvent, i.e., any feasible trade should always keep the amount of num\'eraire
non-negative.  (Because demand curves are non-negative, the amount of the risky asset is
automatically non-negative.)  Equivalently, the second property is broadly known in financial
markets as a ``no credit'' requirement, i.e., that the exchange does not incorporate the ability
of LPs to take credit. In the remainder of the section, we formalize and prove these properties
for our model.

\begin{proposition}
[Price discovery]
If there exists an outside market with fixed external market price $p$ of the risky asset with respect to the num\'eraire, then external market participants (arbitrageurs) always have financial incentive to trade with an exchange defined as per the framework of \Cref{subsec:model_prim} until the price of such exchange becomes equal to the external market price.
\end{proposition}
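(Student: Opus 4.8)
The plan is to exhibit, for any state of the exchange whose current price $p_0$ differs from the external market price $p$, an explicit arbitrage round trip that yields strictly positive profit and moves the exchange price strictly toward $p$; since a profit-seeking arbitrageur will keep executing such trades and has no incentive to move the price past $p$, the price converges to $p$. The trade is the obvious one. If $p_0 < p$, the exchange underprices the risky asset, so the arbitrageur buys it from the exchange by targeting a new price $p_1$ with $p_0 < p_1 \le p$, acquiring $g(p_0) - g(p_1)$ units, and then sells those units on the outside market at price $p$. If $p_0 > p$, the arbitrageur does the reverse: buy $g(p_1) - g(p_0)$ units externally at price $p$ and sell them to the exchange by targeting $p_1$ with $p \le p_1 < p_0$.

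First I would compute the net num\'eraire profit of this round trip using the trade rule \Cref{eq:trade_numeraire} together with the external leg, which is worth $\pm\, p\,(g(p_0) - g(p_1))$, and simplify via $g(p_0) - g(p_1) = -\int_{p_0}^{p_1} dg(p')$. In both cases the profit collapses to the single expression
\[
\Pi(p_1) \;=\; \int_{p_0}^{p_1} (p' - p)\, dg(p')\,,
\]
a Riemann--Stieltjes integral that is finite because $g$ is monotone and hence of bounded variation on the relevant compact subinterval of $(0,\infty)$. Next I would check its sign: for every $p'$ strictly between $p_0$ and $p$, the factor $p'-p$ and the non-positive increment $dg(p')$ have opposite signs when $p_1 > p_0$ and, after accounting for the orientation of $\int_{p_0}^{p_1}$, the same net sign when $p_1 < p_0$; either way $\Pi(p_1) \ge 0$, with strict inequality unless $g$ is constant on the interval between $p_0$ and $p$. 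Finally, $\Pi$ is maximized at $p_1 = p$: extending $p_1$ from an interior point toward $p$ only adds non-negative increments to the integral, whereas extending $p_1$ past $p$ adds increments $(p'-p)\,dg(p')$ of the opposite sign. This gives everything: a profit-seeking arbitrageur always chooses to push the exchange price toward $p$, realizes strictly positive profit whenever the aggregate demand curve is non-flat between the current and external prices, and has no incentive to move the price away from $p$ once it is reached.

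The only real subtlety is bookkeeping --- being careful with the orientation of $\int_{p_0}^{p_1}$ when $p_1 < p_0$ and with the sign conventions in \Cref{eq:trade_numeraire} --- together with the degenerate case in which $g$ is flat in a neighborhood of $p$: there the arbitrage profit from reaching $p$ is zero and the exchange price is pinned down only up to that flat interval, so the conclusion is cleanest (convergence to exactly $p$) when $g$ is strictly decreasing near $p$. I would flag this caveat explicitly rather than suppress it; the remaining work is a short computation.
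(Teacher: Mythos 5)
Your proposal is correct, and it reaches the same destination as the paper's proof --- both set up the arbitrageur's round-trip profit from moving the exchange to $p_1$ and then unwinding on the external market, and both show this profit is maximized at $p_1 = p$. The difference is in how the maximum is established. The paper integrates by parts to rewrite the objective as $(p_1-p)g(p_1) - \int_0^{p_1} g(p)\,dp$ (plus a constant) and then invokes first-order conditions, which implicitly assumes $g$ is differentiable at the relevant points. You instead keep the profit in the Stieltjes form $\Pi(p_1) = \int_{p_0}^{p_1}(p'-p)\,dg(p')$ and argue directly from the signs of the increments $(p'-p)\,dg(p')$ that $\Pi$ is nondecreasing as $p_1$ moves toward $p$ and nonincreasing past it. Your version is slightly more general: it covers non-smooth aggregate demand curves --- in particular the step functions arising from limit order books, which the framework explicitly includes --- where the paper's first-order-condition step would need to be reinterpreted (e.g., via one-sided monotonicity of the objective). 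Your explicit flagging of the degenerate case where $g$ is flat near $p$ (profit zero, price pinned only up to the flat interval) is a caveat the paper elides; it does not contradict the proposition as stated, since the claim is about the \emph{incentive} to trade toward $p$, but it is worth noting that strict convergence to exactly $p$ requires $g$ to be non-constant near $p$.
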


\begin{proposition}
[Budget balance]
An exchange defined as in the framework of \Cref{subsec:model_prim} is budget-balanced or solvent, i.e., the amount of num\'eraire that the joint pool contains at all times (with any sequence of feasible trades, or liquidity additions/removals) is non-negative.
\end{proposition}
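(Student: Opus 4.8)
The plan is to exhibit a \emph{path-independent} invariant for the pool's numéraire balance and then observe that this invariant is manifestly non-negative. Concretely, I claim that whenever the aggregate demand curve is some non-increasing $g$ and the current price is $p$, the quantity of numéraire held in the joint pool equals exactly
\[
N(g,p) \;:=\; -\int_0^p p'\, dg(p')\,.
\]
Granting this, budget balance is immediate: since $g$ is non-increasing, the Riemann–Stieltjes measure $dg$ is non-positive, so $-p'\, dg(p') \ge 0$ for every $p' \in (0,p)$, and hence $N(g,p) \ge 0$ (the integral being well-defined, possibly $+\infty$, by monotonicity of $g$, exactly as noted after \Cref{eq:lp_numeraire_deposit}).

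So it remains to prove the invariant, which I would do by induction over the sequence of operations applied to the exchange (trades, mints, burns), since the pool's holdings are constant between consecutive operations. \emph{Base case:} with $n$ LPs and initial price $p_0$, summing the per-LP numéraire deposits of \Cref{eq:lp_numeraire_deposit} and using linearity of the integral with $g=\sum_i g_i$ gives a pool balance of $\sum_{i=1}^n\big(-\int_0^{p_0} p'\, dg_i(p')\big) = -\int_0^{p_0} p'\, dg(p') = N(g,p_0)$. \emph{Trade step:} a trade that moves the price from $p_0$ to $p_1$ leaves $g$ unchanged and, by \Cref{eq:trade_numeraire}, changes the pool's numéraire by $-\int_{p_0}^{p_1} p'\, dg(p')$; additivity of the Riemann–Stieltjes integral over the abutting intervals between $0$, $p_0$, and $p_1$ (with the usual sign convention when $p_1<p_0$) then gives $N(g,p_0) - \int_{p_0}^{p_1} p'\, dg(p') = N(g,p_1)$, so the invariant is restored. \emph{Mint step:} adding a demand curve $g_j$ at the current price $p$ contributes $-\int_0^p p'\, dg_j(p') = N(g_j,p)$ numéraire, and linearity gives $N(g,p) + N(g_j,p) = N(g+g_j,p)$, matching the new aggregate curve $g+g_j$. \emph{Burn step:} removing $g_j$ is the same computation in reverse; it is feasible precisely because $N(g,p) - N(g_j,p) = N(g-g_j,p) \ge 0$, the curve $g-g_j=\sum_{i\ne j}g_i$ still being non-increasing.

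I do not anticipate a deep obstacle here; the one point needing care — rather than difficulty — is the claim that the pool's numéraire depends only on the \emph{current} aggregate curve and \emph{current} price, not on the particular history that produced them. This path-independence is exactly what interval-additivity of the Riemann–Stieltjes integral provides, and it is what lets the induction close cleanly (in particular, ``sell'' trades with $p_1<p_0$ drain numéraire but can never overshoot zero, since $N(g,p_1)\ge 0$ unconditionally). A minor technical wrinkle is the possibility of infinite integrals: if $N(g,p)=+\infty$ then $N(g,p)\ge 0$ holds trivially, while in the regime where the relevant integrals are finite all the telescoping identities above are literal equalities, so solvency holds in every case.
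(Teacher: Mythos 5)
Your proof is correct and follows essentially the same route as the paper's: both arguments rest on the observation that the pool's num\'eraire is always exactly $-\int_0^{p} p'\, dg(p')$ for the current aggregate curve $g$ and price $p$ (non-negative since $g$ is non-increasing), with linearity handling mints/burns and interval-additivity of the Riemann--Stieltjes integral handling trades. Your write-up merely makes the invariant and the induction over operations more explicit than the paper does.
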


We defer the full proofs of these two propositions to \Cref{app:proofs_model_props}.

\section{Exchange Description Complexity \& Examples}
\label{sec:complex_n_ex}

Our general model in \Cref{subsec:model_prim} allows LPs to submit arbitrary
downward-sloping demand curves.  Such curves are not generally
representable in a finite amount of space, so practical considerations
suggest restricting the space of demand curves that LPs are allowed 
to submit.
We will say that an \emph{exchange mechanism} is a restriction of
the general exchange framework of \Cref{subsec:model_prim} in which
each LP demand curve is required to belong to a set of allowable
demand curves, i.e., $g_i \in \calG$ for some class $\calG$ of
non-increasing, non-negative functions over the positive reals.
An exchange mechanism, then, is defined by the choice of class~$\calG$.

Towards defining a measure of exchange complexity, we will be
interested in succinct ways of representing all the demand
functions~$g$ in a class~$\calG$.
Specifically, given an arbitrary such class~$\calG$, we can consider
its conical hull. This is the smallest convex cone that contains\footnote{This definition makes sense because the
  intersection of convex cones is again a convex cone; see,
  e.g., \textcite{rockafellar} for further background.}
$\calG$
or, equivalently, the closure of~$\calG$ under finite non-negative
linear combinations:
\[
\cone(\calG) = \left\{ \sum_{i=1}^k c_i g_i(p) : g_i(p)\in\calG, c_i \ge 0, k\in\N \right\}
\,.
\]
In our context, non-negative linear combinations can be interpreted as
aggregations of multiple LP positions.

A {\em basis} of a cone is a minimum-cardinality set of elements that
generates the cone, 
meaning a set $\calS$ such that $\cone(\calS) = \cone(\calG)$.
We then define 
the \textbf{exchange complexity} of an exchange (i.e., a choice
$\calG$ of allowable demand functions) as the cardinality of a basis
for  $\cone(\calG)$.\footnote{While our formalism in principle accommodates exchanges with infinite exchange complexity, any practical exchange needs to be defined by a finite basis on any compact (sub-)domain. Additionally, our results only make use of exchanges that have a finitely generated conic closure to approximate any demand curve within a finite approximation error under reasonable assumptions about the error metrics.}
By definition, if a set $\calG$ of demand functions has exchange
complexity~$k$, every function of~$\calG$ can be represented by a
$k$-tuple of non-negative real numbers (one coefficient for each of the
basis functions).\footnote{The focus of this work is on information-theoretic complexity -- approximation trade-offs, and we do not explicitly model computation. However, our positive results only make use of mechanisms for which computation with basis functions is straightforward.}

Our measure of exchange complexity is, by design, well defined for an
arbitrary collection $\calG$ of allowable demand functions.  In all
the real-world examples that we are aware of, this set $\calG$ is
already closed under non-negative linear combinations (i.e., is a
cone).  In this case, exchange complexity effectively counts an
exchange's ``primitive'' LP positions from which all possible
aggregations of LP positions can be derived.

This definition of exchange complexity allows us to formalize
the intuition that some exchanges are easier to represent than
others (e.g., that CFMMs are simpler than LOBs).
Next, we evaluate the exchange complexity of all of the most popular
types of exchanges used to trade crypto assets.

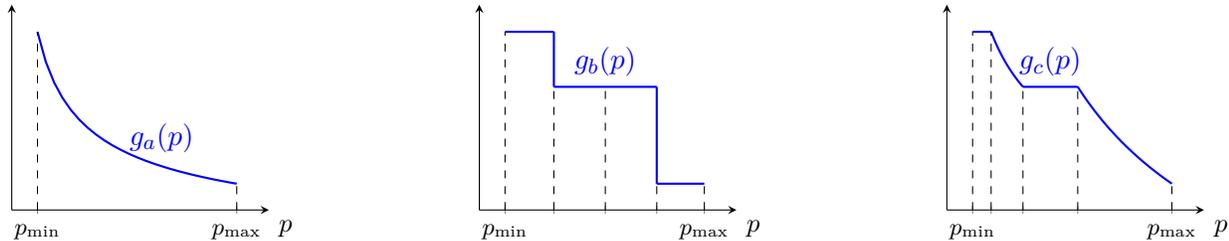
\begin{figure}
\centering
\captionsetup[subfigure]{labelformat=empty}
\begin{subfloat}[\label{subfig:cfmm}]
\centering
    \begin{tikzpicture}
      \begin{axis}[
        clip=true,
        axis lines=left,
        xtick={0.2, 1.75},
        xticklabels={$\pmin$, $\pmax$},
        ytick=\empty,
        yticklabels={},
        xlabel=$p$,
        small,
        xmin=0, xmax=2,
        ymin=0.5, ymax=2.5,
        footnotesize,
        ];
        \addplot[mark=none,blue,thick,domain=0.2:1.75]{1 / sqrt(x)} node[midway,right=7] {$g_a(p)$};
        \addplot[mark=none,dashed] coordinates {(0.2, 0.5) (0.2, 2.236)};
        \addplot[mark=none,dashed] coordinates {(1.75, 0.5) (1.75, 0.756)};
      \end{axis}
    \end{tikzpicture}
\end{subfloat}
\hfill
\begin{subfloat}[\label{subfig:lob}]
\centering
\begin{tikzpicture}
      \begin{axis}[
        clip=true,
        axis lines=left,
        xtick={0.2, 0.58, 0.98, 1.38, 1.75},
        xticklabels={$\pmin$, , , , $\pmax$},
        ytick=\empty,
        yticklabels={},
        xlabel=$p$,
        small,
        xmin=0, xmax=2,
        ymin=0.5, ymax=2.5,
        footnotesize,
        ];
        \addplot[mark=none,blue,thick,domain=0.2:0.58]{2.236};
        \addplot[mark=none,blue,thick] coordinates {(0.58, 1.7) (0.58, 2.236)};
        \addplot[mark=none,blue,thick,domain=0.58:1.38]{1.7} node[midway,above] {$g_b(p)$};
        \addplot[mark=none,blue,thick] coordinates {(1.38, 0.756) (1.38, 1.7)};
        \addplot[mark=none,blue,thick,domain=1.38:1.75]{0.756};
        \addplot[mark=none,dashed] coordinates {(0.2, 0.5) (0.2, 2.236)};
        \addplot[mark=none,dashed] coordinates {(0.58, 0.5) (0.58, 1.7)};
        \addplot[mark=none,dashed] coordinates {(0.98, 0.5) (0.98, 1.7)};
        \addplot[mark=none,dashed] coordinates {(1.38, 0.5) (1.38, 0.756)};
        \addplot[mark=none,dashed] coordinates {(1.75, 0.5) (1.75, 0.756)};
      \end{axis}
    \end{tikzpicture}
\end{subfloat}
\hfill
\begin{subfloat}[\label{subfig:v3}]
\centering
\begin{tikzpicture}
      \begin{axis}[
        clip=true,
        axis lines=left,
        xtick={0.2, 0.344, 0.5917, 1.0177, 1.75},
        xticklabels={$\pmin$, , , , $\pmax$},
        ytick=\empty,
        yticklabels={},
        xlabel=$p$,
        small,
        xmin=0, xmax=2,
        ymin=0.5, ymax=2.5,
        footnotesize,
        ];
        \addplot[mark=none,blue,thick,domain=0.2:0.344]{2.236};
        \addplot[mark=none,blue,thick,domain=0.344:0.5917]{1.7+(2.236-1.7) * (1 / sqrt(x) - 1 / sqrt(0.5917)) / (1 / sqrt(0.344) - 1 / sqrt(0.5917))};
        \addplot[mark=none,blue,thick,domain=0.5917:1.0177]{1.7} node[midway,above] {$g_c(p)$};
        \addplot[mark=none,blue,thick,domain=1.0177:1.75]{0.756+(1.7-0.756) * (1 / sqrt(x) - 1 / sqrt(1.75))
          / (1 / sqrt(1.0177) - 1 / sqrt(1.75))};
        \addplot[mark=none,dashed] coordinates {(0.2, 0.5) (0.2, 2.236)};
        \addplot[mark=none,dashed] coordinates {(0.344, 0.5) (0.344, 2.236)};
        \addplot[mark=none,dashed] coordinates {(0.5917, 0.5) (0.5917, 1.7)};
        \addplot[mark=none,dashed] coordinates {(1.0177, 0.5) (1.0177, 1.7)};
        \addplot[mark=none,dashed] coordinates {(1.75, 0.5) (1.75, 0.756)};
      \end{axis}
    \end{tikzpicture}
\end{subfloat}
\caption{$g\in\cone(\calG)$ for three typical cases: (a) CPMM, (b) LOB, (c) Uniswap v3}
\label{fig:ex}
\end{figure}

\paragraph{CFMMs} CFMMs are generated by the restriction to
non-negative scalar multiples of a \emph{single} basis function, i.e.,
$ \calG = \{ c\cdot g(p) : c\ge 0 \} , $ where $g(p)$ is \emph{one}
reference demand curve, out of all the possible curves of the CFMM.  The coefficient $c$ of
this basis function can then be interpreted as the liquidity
parameter.  As an example, for the CPMM, we can choose
$g(p)=1/\sqrt{p}$ (cf., \Cref{subfig:cfmm}); the coefficient can be
interpreted as $\sqrt{k}$ for the~$k$ in ``$xy=k$.''  In general,
irrespective of the bonding curve, the exchange complexity of a CFMM
is $1$.  Under standard assumptions (e.g., as in~\textcite{angeris2021replicatingmarketmakers}) on a
CFMM's bonding curve~$f$, the corresponding basis function~$g$ can be
derived from~$f$ in a mechanical way, through optimization.

\paragraph{LOBs} Limit order books consist of limit orders, which are
(buy or sell) orders of quantities of the  risky asset at some
price. The  predetermined prices at which limit orders can be
specified are called \emph{ticks}. In our framework, limit orders
can be represented by a set of basis functions in which each function
corresponds to a limit order at a specific tick (i.e., a step
function, where the step occurs at the tick).
According to our definition of exchange complexity above, then, the
exchange complexity of a limit order book (cf., \Cref{subfig:lob})
with $k$ ticks is $k$.
If we restrict our attention to a
price range $[\pmin, \pmax]$ with ticks $\pmin$, $\pmin + \epsilon$,
$\pmin + 2\epsilon$, \ldots, $\pmax$,
the exchange complexity of such a LOB would be
$
(\pmax-\pmin)/\eps
.
$

There is a superficial difference in convention between traditional
LOBs and our model of them in the preceding paragraph, concerning
the \emph{default action} after a trade that crosses the price of a
limit order.
In an LOB, the matching limit order would be automatically removed
from the order book, whereas in our
framework here the corresponding LP would, in effect, automatically 
place a new limit order in the opposite direction at the same price.
In other words, a LOB basis function
is equivalent to both a limit buy and a limit sell at the tick price,
and which one takes effect depends on the current price $p_0$ and the
trade to be executed. 
Because limit orders can be easily added to or removed from
traditional LOBs, and because our model accommodates LP mints and
burns, there is no material difference between the two viewpoints.

\paragraph{Uniswap v3} Uniswap v3 (cf., \Cref{subfig:v3}) can be
viewed as a hybrid of a CFMM and a LOB, with the CPMM curve applied
only within a short price interval (in between two of the pre-defined
ticks).  By
allowing multiple intervals, Uniswap v3  allows concentrated positions
in the spirit of LOBs, a property known as
\emph{concentrated liquidity}.
If there are $k$ ticks contained in the
interior of an interval $[\pmin, \pmax]$, then Uniswap v3's complexity
on this interval is $k$.  (There is one basis function for each price
segment $[t_i, t_{i+1}]$ between two successive ticks; the function is constant up
until the interval, decreases as in a CPMM within the interval, and
is zero after the interval, as in \Cref{eq:v3_basis}).
\begin{equation}
\label{eq:v3_basis}
g_i(p) =
\begin{cases}
    \frac{1}{\sqrt{t_i}} - \frac{1}{\sqrt{t_{i+1}}} \,, &\text{for } p\le t_i \\
    \frac{1}{\sqrt{p}} - \frac{1}{\sqrt{t_{i+1}}} \,, &\text{for } t_i \le p \le t_{i+1} \\
    0 \,, &\text{for } p \ge t_{i+1}
\end{cases}
\end{equation}

Thus, the exchange complexity of both LOBs
and Uniswap v3 is controlled by the number of ticks (independent of
the spacing between them).  In practice,
ticks are sparser in Uniswap v3 than in a traditional LOB, and
the former accordingly has lower exchange complexity than the latter.
For an example
calculation, if the ticks in Uniswap v3 are assumed to be of the form
$1.0001^i$, and $\pmin=1.0001^s, \pmax=1.0001^{s+t}$, then Uniswap
v3's complexity in the price interval $[\pmin,\pmax]$ is
\[
t=\frac{\log(\pmax/\pmin)}{\log 1.0001} \approx 10000.5 \log(\pmax/\pmin)
\,.
\]
We note that range orders in Uniswap v3 correspond to sums of
single-interval positions (with one position per interval in the
range) and are therefore automatically included in the cone generated
by the basis functions defined above.

\section{Complexity -- Approximation Trade-offs}
\label{sec:tradeoffs}

\subsection{Notions of Approximation}
\label{subsec:approx}

Having defined the complexity of an exchange mechanism, we turn to
defining the {\em expressiveness} of such a mechanism and proving
fundamental trade-offs between complexity and expressiveness.
Informally, we will measure the expressiveness of an exchange
mechanism via the extent to which its allowable demand curves (i.e.,
the functions in the class $\calG$) can represent arbitrary LP
preferences (i.e., an arbitrary demand curve).

Precisely, denote by $\calF$ the class of all non-increasing functions
$f:[\pmin, \pmax] \to [\fmin, \fmax]$. This is the most general class
of bounded demand curves according to our framework. Any arbitrary
(bounded) preference of an LP will be some specific non-increasing
function $f\in\calF$.\footnote{Note that in what follows $f$ is a
  demand curve, as defined in \Cref{subsec:model_prim}, and not a
  bonding curve of a CFMM.}
We next define the extent to which some allowable demand curve $g \in
\calG$ (with the same domain and range) approximates~$f$.
(In this section we use~$g$ rather than~$g_i$ to denote an arbitrary
function of~$\calG$.)%
\footnote{The restricting to a bounded domain and range is convenient
  but can be relaxed considerably.  The fundamental issue is that, to
  meaningfully speak about function approximations and avoid infinite
  distances between distinct functions, we need to impose
  constraints on allowable demand functions and/or the choice of
  distance function and underlying measure (on prices).
Functions with bounded domain and range are convenient because they
are integrable no matter what the distance notion and measure.  Our
results can be generalized by considering combinations of demand
function classes and classes of measures for which the same
integrability properties are guaranteed.

Additionally, it will be apparent from our lower bound
(\Cref{thm:lower}) that, if the family of functions $\calF$ was not
bounded by some finite bound $\fmax < \infty$, there would be no
finite approximation error guarantee with any finite complexity (under
any natural notion of approximation error).
}

First, we introduce the weighted $\ell_p$ norm in the function space
as a distance metric; without loss of generality, assume we have a
normalized (and integrable) weight function
$w : [\pmin,\pmax] \to \Rp$ such that $\int_\pmin^\pmax w(p) \, dp = 1$. Then, the
weighted $\ell_p$ distance of two functions $f,g\in\calF$ is
\[
d(f, g) = \left( \int_\pmin^\pmax w(s) \left| f(s)-g(s) \right|^p \, ds \right)^{1/p}
\,.
\]
The weight function~$w$ can be interpreted as a measure on the price
space, for example reflecting a belief (by an LP, the AMM designer, or
the community) that some prices may be more relevant than others.  On
a first read, we encourage the reader to take~$w$ to be the constant
function $w(s) = 1/(\pmax-\pmin)$ for all~$s \in [\pmin,\pmax]$.

Given this definition, we define the \textbf{approximation error} of
the exchange defined by $\calG$ as the worst-case (over arbitrary LP
preferences/demand curves $f\in\calF$) distance from the best-case
approximation (over allowable functions $g\in\cone(\calG)$) of $f$, as
above:
\begin{equation}
\label{eq:err}
\text{err}(\calG) = \sup_{f\in\calF} \left\{ \inf_{g\in\cone(\calG)} d(f,g) \right\}.
\end{equation}

\subsection{Upper and Lower Bounds}
\label{subsec:main_bounds}

From the AMM designer's perspective, an ``optimal'' AMM would
enable LPs to have their preferences
expressed closely; a bit more formally, the worst-case
approximation error through the AMM for arbitrary LP demand curves
should be low, and intuitively should decrease with the complexity of
the exchange mechanism: the higher exchange complexity should result
in a payoff of lower
worst-case approximation error.
The results below characterize this trade-off, by identifying the
best-possible worst-case approximation error as a function of the
exchange complexity.
For example, for the special case in which
the approximation metric between two functions
is the (unweighted) $\ell_1$ distance,
an exchange complexity (equivalently, number of
basis functions) of $\Th(1/\eps)$ is necessary and sufficient to
achieve an $\eps$ worst-case approximation error.

Our upper bound argument also implies the (intuitive but previously
unformalized) fact that limit order books %
at appropriately
defined  price ticks attain the optimal
approximation error guarantee for a given level of exchange complexity
(up to a factor of~2).
In other words, when computation and storage
are not first-order constraints, LOBs
are nearly optimally expressive exchange mechanisms.

\begin{theorem}
[Upper bound]
\label{thm:upper}
For every $\eps>0$, there exists a limit order book (LOB) exchange mechanism $\calG$ with exchange complexity $k = \Omic(1/\eps^p)$ that attains approximation error
\[
\text{err}(\calG) \le \eps \cdot \frac{\fmax - \fmin}{2}
\,.
\]
\end{theorem}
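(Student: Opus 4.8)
The plan is to construct, for each $\eps$, a \emph{single} LOB---i.e., one fixed set of ticks---that approximates \emph{every} $f\in\calF$ well. The crucial point is the order of quantifiers in \Cref{eq:err}: the class $\calG$ must be chosen before the worst-case demand curve $f$, so we cannot place ticks at the value crossings of $f$ (which would depend on $f$). Instead I would place the ticks at \emph{quantiles of the weight function $w$}. Concretely, set $k=\lceil 1/\eps^{p}\rceil$, let $W(p)=\int_{\pmin}^{p}w(s)\,ds$ be the (continuous, non-decreasing) cumulative weight, and choose ticks $\pmin=t_0<t_1<\dots<t_{k-1}<t_k=\pmax$ with $t_i=\inf\{p:W(p)\ge i/k\}$, so that $\int_{t_i}^{t_{i+1}}w(p)\,dp\le 1/k$ for every $i$. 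Let $\calG$ be the LOB whose ticks are $t_1,\dots,t_{k-1},\pmax$; its exchange complexity is $k=\Omic(1/\eps^{p})$.

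Given an arbitrary $f\in\calF$, I would take the approximant $g$ to be the step function equal on each $[t_i,t_{i+1})$ to the midpoint $c_i=\tfrac12(M_i+m_i)$ of the range of $f$ there, where $M_i=\sup_{[t_i,t_{i+1})}f$ and $m_i=\inf_{[t_i,t_{i+1})}f$. Since $f$ is non-increasing, $M_0\ge m_0\ge M_1\ge m_1\ge\dots\ge m_{k-1}\ge 0$, so the $c_i$ are non-increasing and non-negative; hence $g$ is a non-increasing non-negative step function with breakpoints among the ticks, and $g\in\cone(\calG)$ --- explicitly, $g=c_{k-1}\cdot\mathbf{1}+\sum_{i=1}^{k-1}(c_{i-1}-c_i)\,\mathbf{1}_{[\pmin,t_i)}$, a non-negative combination of the constant function (the limit order at tick $\pmax$) and the limit orders at $t_1,\dots,t_{k-1}$. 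On $[t_i,t_{i+1})$ we have $|f(p)-c_i|\le\omega_i/2$ pointwise, where $\omega_i=M_i-m_i\ge 0$ is the oscillation of $f$ there.

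To bound the error, note that monotonicity makes the oscillations telescope: $m_i\ge M_{i+1}$ for each $i$, so $\sum_{i=0}^{k-1}\omega_i\le M_0-m_{k-1}\le\fmax-\fmin$. Since $p\ge 1$, the inequality $\|\cdot\|_{\ell_p}\le\|\cdot\|_{\ell_1}$ gives $\sum_i(\omega_i/2)^p\le\big(\sum_i\omega_i/2\big)^p\le\big((\fmax-\fmin)/2\big)^p$. Therefore
\begin{align*}
d(f,g)^p &= \sum_{i=0}^{k-1}\int_{t_i}^{t_{i+1}} w(p)\,|f(p)-c_i|^p\,dp \\
&\le \sum_{i=0}^{k-1}\Big(\tfrac{\omega_i}{2}\Big)^{p}\int_{t_i}^{t_{i+1}} w(p)\,dp \;\le\; \frac1k\Big(\frac{\fmax-\fmin}{2}\Big)^{p},
\end{align*}
so $d(f,g)\le k^{-1/p}\,(\fmax-\fmin)/2\le\eps\,(\fmax-\fmin)/2$. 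Since $g\in\cone(\calG)$ and $\calG$ was chosen independently of $f$, taking $\inf_g$ then $\sup_f$ in \Cref{eq:err} yields $\text{err}(\calG)\le\eps\,(\fmax-\fmin)/2$.

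I expect the only real subtlety to be the one highlighted above: recognizing that the ticks must be oblivious to $f$, which rules out the (seemingly sharper) value-space partition and forces the $w$-quantile construction in the domain. After that, the argument reduces to the telescoping bound on the oscillations combined with $\|\cdot\|_{\ell_p}\le\|\cdot\|_{\ell_1}$; the remaining checks --- that quantile ticks exist even where $w$ vanishes (we use only $\int_{t_i}^{t_{i+1}}w\le 1/k$, not equality), that $g$ is non-increasing, non-negative, and lies in $\cone(\calG)$, and that the single point $\{\pmax\}$ contributes nothing to the integral --- are routine. The factor of $\tfrac12$ is exactly what one gains by centering $c_i$ at the midpoint of the oscillation interval rather than at an endpoint.
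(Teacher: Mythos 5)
Your proposal is correct and follows essentially the same route as the paper's proof: an equal-$w$-measure partition into $\Th(1/\eps^p)$ intervals, a midpoint step-function approximant, the telescoping bound on the total oscillation of a monotone $f$, and the $\ell_p\le\ell_1$ inequality. The only (harmless) differences are cosmetic: you center at $\sup$/$\inf$ over each interval rather than at the endpoint values $f(t_i), f(t_{i+1})$, and you spell out the explicit conic combination and the case of vanishing $w$, which the paper leaves implicit.
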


\begin{theorem}
[Lower bound]
\label{thm:lower}
For every $\eps>0$, every exchange mechanism $\calG$ with exchange complexity $\Omic(1/\eps^p)$ suffers approximation error
\[
\text{err}(\calG) \ge \eps \cdot \Omeg(\fmax - \fmin)
\,.
\]
\end{theorem}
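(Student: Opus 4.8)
The plan is to strip away the conic structure, reduce to a Kolmogorov-width lower bound for the class $\calF$ of bounded monotone demand curves, and then play against an explicit hard family of ``single-drop'' step functions.

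\emph{Reduction to subspaces.} If $\calG$ has exchange complexity $k=\Omic(1/\eps^p)$ then, by definition, $\cone(\calG)=\cone\{g_1,\dots,g_k\}\subseteq V:=\operatorname{span}\{g_1,\dots,g_k\}$, a linear subspace of dimension $\le k$. Hence
\[
\text{err}(\calG)=\sup_{f\in\calF}\inf_{g\in\cone(\calG)}d(f,g)\ \ge\ \sup_{f\in\calF}\inf_{g\in V}d(f,g)=\sup_{f\in\calF}\dist_{L^p(w)}(f,V),
\]
so it suffices to show that no $k$-dimensional subspace $V$ approximates every $f\in\calF$ to weighted $\ell_p$-error $\eps\cdot\Omeg(\fmax-\fmin)$. (This is exactly where finiteness of $\fmax$ enters, as anticipated in the footnote to the statement.)

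\emph{Hard family.} Put $m=2k$ and pick $\pmin=t_0<t_1<\dots<t_m=\pmax$ with each slice $E_l:=(t_{l-1},t_l]$ carrying equal weight $w(E_l)=1/m$. For $l=0,\dots,m$ set $\chi_l:=\fmin+(\fmax-\fmin)\mathbf{1}_{[\pmin,t_l]}\in\calF$. The increments $\eta_l:=\chi_l-\chi_{l-1}=(\fmax-\fmin)\mathbf{1}_{E_l}$ have pairwise disjoint supports, common norm $\beta:=\|\eta_l\|_{L^p(w)}=(\fmax-\fmin)m^{-1/p}=\Theta\!\big((\fmax-\fmin)k^{-1/p}\big)$, and $\chi_l=\chi_0+\sum_{i\le l}\eta_i$. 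Suppose a $k$-dimensional $V$ achieved $\dist(\chi_l,V)\le r$ for every $l$, with near-optimal approximants $g_l\in V$. Then the differences $\Delta g_l:=g_l-g_{l-1}\in V$ satisfy $\|\Delta g_l-\eta_l\|_{L^p(w)}\le 2r$; i.e., the $k$-dimensional $V$ is $2r$-close to each of $m=2k$ disjoint-support functions of norm $\beta$. The whole theorem reduces to showing this forces $r=\Omeg(\beta)$.

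\emph{The core estimate and the obstacle.} For $p=1$ this is a one-line rank argument: if $\max_l\dist(\eta_l,V)<\beta$, then for the best approximants $v_l$ and any $\lambda\neq0$, $\|\sum_l\lambda_l v_l\|_1\ge\|\sum_l\lambda_l\eta_l\|_1-\sum_l|\lambda_l|\,\|v_l-\eta_l\|_1\ge\|\lambda\|_1\big(\beta-\max_l\|v_l-\eta_l\|_1\big)>0$, so the $v_l$ are linearly independent --- impossible for $m>k$; hence $r\ge\beta/2$. For $p=2$ one uses the orthogonal projection $P$ onto $V$: since $\|\eta_l-P\eta_l\|_2^2=\beta^2-\|P\eta_l\|_2^2$ and $\sum_l\|P\eta_l\|_2^2\le\beta^2\operatorname{tr}(P)=k\beta^2$ (the $\eta_l/\beta$ being orthonormal), summing yields $m(2r)^2\ge\sum_l\dist(\eta_l,V)^2\ge(m-k)\beta^2$, i.e.\ $r\gtrsim\beta$. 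The difficulty for general $p$ --- and the step I expect to be the crux --- is that metric projection onto $V$ is no longer linear, the trace identity disappears, and the bare rank argument degrades to $r\gtrsim\beta\,m^{-(p-1)/p}$, losing a factor $k^{1-1/p}$. The right formulation is the $L^p$ analogue ``$\sum_l\dist_{L^p(w)}(\eta_l,V)^p\gtrsim_p(m-k)\beta^p$''. I would prove it by passing to a finite-dimensional model: restricting each $\Delta g_l$ to $\bigcup_l E_l$ and collapsing the slice $L^p(E_l)$ onto the line spanned by $\eta_l$ produces a subspace $W\subseteq\ell_p^m$ with $\dim W\le k$ that is $2r$-close (at the scale $\beta$) to each standard basis vector $e_l$, whereupon it remains to show
\[
\textstyle\sum_{l=1}^m\dist_{\ell_p^m}(e_l,W)^p\ \ge\ c_p\,m\qquad\text{whenever }\dim W\le m/2 ,
\]
with $c_p>0$ depending only on $p$. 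This is the trace identity for $p=2$; for general $p$ I would attack it via Hahn--Banach duality (representing $\dist_{\ell_p^m}(e_l,W)$ through a unit vector of $W^{\perp}\subseteq\ell_{p'}^m$, a subspace of dimension $\ge m/2$) together with a convexity/pigeonhole estimate showing that such a subspace cannot have a uniformly small $l$-th coordinate for more than $m/2$ of the indices $l$. Feeding $c_pm\le m(2r)^p$ back with $\beta=(\fmax-\fmin)m^{-1/p}$ and $m=2k=\Theta(1/\eps^p)$ gives $r\ge\eps\cdot\Omeg(\fmax-\fmin)$; establishing that $c_p$ stays bounded away from $0$ (rather than decaying in $m$) is the delicate point.
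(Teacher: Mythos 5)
Your very first move---replacing $\cone(\calG)$ by the linear span of its basis and reducing to a Kolmogorov-width bound against arbitrary $k$-dimensional subspaces---is where the argument breaks, and for $p>2$ it breaks irreparably rather than just at the ``delicate point'' you flag. The finite-dimensional inequality you reduce everything to, namely $\sum_{l=1}^{m}\dist_{\ell_p^m}(e_l,W)^p\ge c_p m$ whenever $\dim W\le m/2$, is false for $p>2$. By the duality you yourself invoke, $\dist_{\ell_p^m}(e_l,W)=\sup\{|y_l|: y\in W^{\perp},\ \|y\|_{p'}\le 1\}$ with $p'<2$; if you take $W^{\perp}$ to be a Kashin subspace of dimension $m/2$, on which $\|y\|_{1}\ge c\sqrt{m}\,\|y\|_2$ and hence $\|y\|_{p'}\ge c\,m^{1/p'-1/2}\|y\|_2$, then every admissible $y$ satisfies $|y_l|\le\|y\|_2\le C\,m^{1/p-1/2}$, so $\sum_l \dist_{\ell_p^m}(e_l,W)^p\le C^p m^{2-p/2}=o(m)$. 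Lifting $W$ back to piecewise-constant functions on your slices $E_l$ (an isometric copy of $\ell_p^m$ up to the scale $m^{-1/p}$) produces a subspace $V$ of dimension $m/2=k$ with $\dist_{L^p(w)}(\eta_l,V)=o(\beta)$ for \emph{every} $l$: your hard family of single-jump step functions is simply not hard against general subspaces once $p>2$. (This does not contradict the theorem---the lifted subspace is not the conic hull of non-increasing functions---but it does kill the subspace relaxation. Your $p=1$ rank argument and $p=2$ trace argument are correct, so the plan is salvageable for those exponents, but the theorem is claimed for general $p$.)

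The structure you discard in that first step is precisely what the paper's proof runs on. The paper splits $[\pmin,\pmax]$ into roughly $2n$ equal-weight slices and observes (\Cref{lemma:lowerbasis}) that each basis function, being \emph{non-increasing}, can have a drop exceeding half of its total drop on at most one of $n$ disjoint double-intervals; pigeonholing $n$ intervals against $n-1$ basis functions yields one interval on which every basis function has a small relative drop, and---because the combination coefficients are \emph{non-negative}---the same inequality \Cref{eq:gsmalljump} passes to every $g\in\cone(\calG)$. A single step function $f_a$ jumping from $\fmax$ to $\fmin$ inside that starved interval then forces $\int w|f_a-g|^p\ge\Omeg((\fmax-\fmin)^p/n)$ for all $g\in\cone(\calG)$ by a short case analysis. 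Both ingredients (monotonicity of the generators and conic rather than linear combinations) are essential there, and neither survives passage to $\operatorname{span}\{g_1,\dots,g_k\}$. If you want to repair your approach, you must keep the cone of monotone functions throughout; at that point you are essentially reconstructing the paper's pigeonhole argument.
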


For the detailed proofs of \Cref{thm:upper,thm:lower} we refer to \Cref{subsec:proofupper,subsec:prooflower} respectively.

\section{Uniswap v3}
\label{sec:v3}

Next, we answer the question: to what extent do various formats in practice come close to this complexity -- approximation trade-off?
Historically, constant product market makers (CPMMs) were first built for gas efficiency purposes \parencite{adams2020uniswap}, but when it was realized that this came often at the expense of capital efficiency, the proposal of Uniswap v3 came around \parencite{adams2021uniswap}, which trades like a CPMM curve inside tight intervals at a pre-defined tick spacing, which are otherwise independent.
In this section, we consider Uniswap v3, which is at the time of writing a widely used AMM, as an enlightening example to showcase how our theory can be applied to formally prove approximation guarantees for AMMs employed in practice.

More specifically, we can prove that ---under a particular assumption of the returns distribution with maximum entropy, i.e., a uniform prior in the returns space--- a variation of Uniswap v3 with variable tick spacing $\delta$ achieves an approximation error that matches (up to a constant multiplicative factor) the lower bound in \Cref{thm:lower}.
The precise formulation follows.

\begin{theorem}
\label{thm:v3upper}
For every $\eps > 0$, there exists a Uniswap v3-like exchange mechanism $\calG$ with $n=\Omic(1/\eps^p)$ ticks at prices $\pmin (1+\delta)^i$ for $i\in\{0,1,\dots,n\}$ where $\log(1+\delta) = \eps^p \log(\pmax/\pmin)$, that attains approximation error according to \Cref{eq:err} with a normalized weight function $w(p)$ which assigns measure at most $\Omic(1/n)$ to each of the intervals defined by these ticks, of
\[
\text{err}(\calG) \le \Omic(\eps \cdot (\fmax - \fmin))
\,.
\]
\end{theorem}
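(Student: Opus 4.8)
The plan is to run the same argument as the limit-order-book upper bound (\Cref{thm:upper}), but with the step-function basis replaced by the Uniswap v3 single-interval basis functions of \Cref{eq:v3_basis}, and to exploit the fact that the geometric tick grid $t_i = \pmin(1+\delta)^i$ is precisely the grid on which a uniform-in-log-price (equivalently, maximum-entropy-in-returns) weight spreads mass evenly. First I would pin down the parameters: the ticks $t_0,\dots,t_n$ span $[\pmin,\pmax]$, so $n\log(1+\delta)=\log(\pmax/\pmin)$, and combined with $\log(1+\delta)=\eps^p\log(\pmax/\pmin)$ this forces $n=1/\eps^p$ (if this is not an integer, take $n=\lceil 1/\eps^p\rceil$ and $\log(1+\delta)=\log(\pmax/\pmin)/n$, still $n=\Theta(1/\eps^p)$), so the mechanism has complexity $\Omic(1/\eps^p)$; and the hypothesis on $w$ reads $\mu_i:=\int_{t_i}^{t_{i+1}}w(s)\,ds\le c/n$ for every interval and some absolute constant $c$ (with $\mu_i=1/n$ in the canonical case $w(p)\propto 1/p$).

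Next I would describe $\cone(\calG)$. Reading off \Cref{eq:v3_basis}, the basis function $g_i$ for interval $[t_i,t_{i+1}]$ is constant for $p\le t_i$, equals the shifted CPMM arc $1/\sqrt p-1/\sqrt{t_{i+1}}$ on $[t_i,t_{i+1}]$, and vanishes for $p\ge t_{i+1}$; taking a non-negative combination $g=\sum_i c_i g_i$ one checks that the pieces glue continuously, $g$ is non-increasing, and the total decrease $g(t_i)-g(t_{i+1})$ across $[t_i,t_{i+1}]$ equals $c_i\,(1/\sqrt{t_i}-1/\sqrt{t_{i+1}})$, which---since $1/\sqrt{t_i}-1/\sqrt{t_{i+1}}>0$---can be made an arbitrary non-negative number. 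I would also adjoin one extra basis function, either the constant $\bone$ or the full-range CPMM curve $1/\sqrt p$, so that the terminal value $g(\pmax)$ becomes a free non-negative parameter; this adds $1$ to the complexity and is needed only to accommodate $\fmin>0$.

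Given an arbitrary target $f\in\calF$, I would take the $g\in\cone(\calG)$ that interpolates $f$ at every tick, $g(t_i)=f(t_i)$ for $i=0,\dots,n$. This is feasible: $f$ is non-increasing, so the prescribed per-interval decreases $d_i:=f(t_i)-f(t_{i+1})$ are non-negative, and the prescribed terminal value $f(\pmax)\ge\fmin\ge0$ is non-negative. On each $[t_i,t_{i+1}]$ both $f$ and $g$ are non-increasing and share the endpoint values $f(t_i),f(t_{i+1})$, so both lie in $[f(t_{i+1}),f(t_i)]$ and hence $|f(s)-g(s)|\le v_i:=f(t_i)-f(t_{i+1})$ throughout the interval. (The CPMM shape of the v3 arcs plays no role here beyond monotonicity, which is exactly why the estimate is the same as for LOBs.) Consequently
\[
d(f,g)^p=\sum_{i=0}^{n-1}\int_{t_i}^{t_{i+1}}w(s)\,|f(s)-g(s)|^p\,ds\le\sum_{i=0}^{n-1}\mu_i\,v_i^p\le\frac{c}{n}\sum_{i=0}^{n-1}v_i^p\le\frac{c}{n}\Bigl(\sum_{i=0}^{n-1}v_i\Bigr)^p\le\frac{c}{n}\,(\fmax-\fmin)^p,
\]
where I used the measure bound, the inequality $\|v\|_p\le\|v\|_1$ (valid for $p\ge1$ and $v_i\ge0$), and the telescoping identity $\sum_i v_i=f(\pmin)-f(\pmax)\le\fmax-\fmin$. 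Taking $p$-th roots and inserting $n=\Theta(1/\eps^p)$ gives $d(f,g)\le \Omic(\eps\,(\fmax-\fmin))$; since $f$ was arbitrary this bounds $\text{err}(\calG)$ and proves the theorem.

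There is no genuine obstacle here; the points that need care are (i) the $\cone(\calG)$ characterization together with the extra basis function, so that tick-interpolation of an arbitrary non-increasing $f$ is actually realizable inside the v3-style mechanism, and (ii) the bookkeeping that ties $\delta$, $n$, $\eps$, and the $\Omic(1/n)$-per-interval weight hypothesis together---it is precisely this last hypothesis (satisfied by the uniform-in-returns prior) that converts the raw $\ell_p$-mass estimate into the claimed $\Omic(\eps(\fmax-\fmin))$ bound. Finally I would note that this matches the lower bound \Cref{thm:lower} up to the multiplicative constant $c^{1/p}$, which is the content of the theorem.
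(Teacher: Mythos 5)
Your proposal is correct and follows essentially the same route as the paper: the same geometric tick grid, the same single-interval v3 basis functions plus one constant basis function to handle $\fmin>0$, the same interpolating combination $g_f$ with $g_f(t_i)=f(t_i)$, the same per-interval bound $|f-g_f|\le f(t_i)-f(t_{i+1})$ from monotonicity, and the same $\ell_1$-vs-$\ell_p$ telescoping estimate as in the proof of \Cref{thm:upper}. You in fact spell out the final computation that the paper only sketches by reference to \Cref{subsec:proofupper}.
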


The detailed proof of \Cref{thm:v3upper} is relegated to \Cref{subsec:proofv3upper}.

\section{Proofs}
\label{sec:proofs}

\subsection{Proof of \Cref{thm:upper}}
\label{subsec:proofupper}

Let $\eps>0$, and a normalized weight function $w \colon [\pmin,\pmax]\to\Rp$ such that $\int_\pmin^\pmax
w(p)\, dp=1$.
Then, since $w(p)\ge 0\ \forall p\in[\pmin,\pmax]$, split the interval $[\pmin,\pmax]$ into $n=1/\eps^p$ equal measure (according to the weight function) sub-intervals $[t_i, t_{i+1}],\ \forall i\in\{1,2,\dots,n\}$, i.e., such that $\int_{t_i}^{t_{i+1}} w(p) dp = \frac{1}{n}$.
Define the limit order book (LOB) exchange mechanism $\calG=\cone(\calG)$ as the conical hull of the following set of basis functions: each basis function represents a limit order at each price point $t_i$ above,
i.e., the basis function is a unit step function dropping from 1 to 0 at price $t_i$.
The exchange complexity of this $\calG$ is therefore $1/\eps^p$.

Consider any $f\in\calF$, and define the following $g_f\in\cone(\calG)$ that will ``approximate'' this $f$:
\begin{equation}
\label{eq:defg}
\forall p\in(t_i, t_{i+1}),\ g_f(p) = \frac{f(t_i) + f(t_{i+1})}{2}
\,.
\end{equation}
It is true that this $g_f\in\cone(\calG)$, because $g_f$ is piecewise constant, with function value drops occurring only at the prices $t_i$ (see \Cref{subfig:lob} for an example representation).

We have that
\[
\forall p\in(t_i, t_{i+1}),
\
|f(p)-g_f(p)| \leq \frac{f(t_i) - f(t_{i+1})}{2}
\,,
\]
since $f$ is non-increasing, and by the definition of $g_f$ in \Cref{eq:defg}.

Hence, we obtain the desired result:
\begin{align*}
\text{err}(\calG)
=
\sup_{f\in\calF} \left\{ \inf_{g\in\cone(\calG)} d(f,g) \right\}
&\le
\sup_{f\in\calF} \left( \sum_{i=1}^{n} \int_{t_i}^{t_{i+1}} w(s) \left| f(s)-g_f(s) \right|^p ds \right)^{1/p}
\\ &\le
\sup_{f\in\calF} \left( \sum_{i=1}^{n} \int_{t_i}^{t_{i+1}} w(s) \left( \frac{f(t_i) - f(t_{i+1})}{2} \right)^p ds \right)^{1/p}
\\ &=
\frac{1}{2n^{1/p}} \sup_{f\in\calF} \left( \sum_{i=1}^{n} \left[ f(t_i) - f(t_{i+1}) \right]^p \right)^{1/p}
\\ &\le
\frac{1}{2n^{1/p}} \sup_{f\in\calF} \sum_{i=1}^{n} \left[ f(t_i) - f(t_{i+1}) \right]
\\ &\le
\eps \cdot \frac{\fmax - \fmin}{2}
\,,
\end{align*}
where the second-to-last inequality follows from the inequality between $\ell_1$ and $\ell_p$ norms in the function space.

\subsection{Proof of \Cref{thm:lower}}
\label{subsec:prooflower}

Let $\eps>0$, and a normalized weight function $w \colon [\pmin,\pmax] \to \Rp$ such that
$\int_\pmin^\pmax w(p) \, dp=1$.
Similarly to the upper bound, but with double the amount of intervals, split the interval $[\pmin,\pmax]$ into $2(n+2)$ (where $n=1/\eps^p$) equal measure (according to the weight function) sub-intervals $[t_i, t_{i+1}],\ \forall i\in\{1,2,\dots,2n+4\}$, i.e., such that $\int_{t_i}^{t_{i+1}} w(p) dp = \frac{1}{2(n+2)}$.
Now, consider any exchange mechanism $\calG$ with exchange complexity $\le \frac{1}{\eps^p}-1$, i.e., such that $\cone(\calG)$ is generated by $\le \frac{1}{\eps^p}-1$ basis functions; suppose without loss of generality that these are $g_1,g_2,\dots,g_{n-1} \in\cone(\calG)$.

\begin{lemma}
\label{lemma:lowerbasis}
For every basis function $g_i$ (where $i\in\{1,2,\dots,n-1\}$ as above), there exists at most one interval of the form $[t_{2l+1}, t_{2l+3}]$ for some $l\in\{1,\dots,n\}$ (where $t$'s are defined as in the above paragraph) such that
\[
g_i(t_{2l+1}) - g_i(t_{2l+3})
>
\frac{g_i(t_3) - g_i(t_{2n+3})}{2}
\,.
\]
\end{lemma}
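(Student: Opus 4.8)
The plan is a short averaging argument, once the right telescoping identity is in place. First I would record that each basis function $g_i$ lies in $\cone(\calG)$, and that $\cone(\calG)$ is by construction the conical hull of non-increasing, non-negative functions; since any non-negative linear combination of non-increasing, non-negative functions is again non-increasing and non-negative, each $g_i$ is itself a non-increasing, non-negative function on $[\pmin,\pmax]$. This monotonicity is the only structural property of $g_i$ that the argument uses.

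Next I would set up the telescoping. The $n$ intervals $[t_{2l+1}, t_{2l+3}]$ for $l \in \{1,\dots,n\}$ are consecutive and pairwise non-overlapping (adjacent ones share only an endpoint), and their union is exactly $[t_3, t_{2n+3}]$. Hence the increments of $g_i$ across them sum by telescoping to
\[
\sum_{l=1}^{n} \bigl( g_i(t_{2l+1}) - g_i(t_{2l+3}) \bigr) = g_i(t_3) - g_i(t_{2n+3}) =: D \,.
\]
By monotonicity of $g_i$, every summand $g_i(t_{2l+1}) - g_i(t_{2l+3})$ is non-negative, and in particular $D \ge 0$.

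Finally I would conclude by contradiction. Suppose there were two distinct indices $l \ne l'$ in $\{1,\dots,n\}$ with $g_i(t_{2l+1}) - g_i(t_{2l+3}) > D/2$ and $g_i(t_{2l'+1}) - g_i(t_{2l'+3}) > D/2$. Since all the remaining summands in the displayed identity are non-negative, we would get $D = \sum_{l=1}^n \bigl(g_i(t_{2l+1}) - g_i(t_{2l+3})\bigr) > D/2 + D/2 = D$, a contradiction. (The argument is also valid when $D = 0$: then no interval of this form can satisfy the strict inequality at all.) Therefore at most one such interval exists, which is exactly the claim.

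There is no real obstacle here; the only points requiring care are (i) justifying that a basis element of the cone is itself non-increasing, so that all the increments carry a consistent sign, and (ii) the index bookkeeping needed to check that the $n$ "wide" intervals $[t_{2l+1}, t_{2l+3}]$ exactly tile $[t_3, t_{2n+3}]$ — both of which follow directly from the construction in the proof of \Cref{thm:lower}.
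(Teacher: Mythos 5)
Your proof is correct and follows essentially the same route as the paper's: both argue by contradiction that two intervals each exceeding half of $g_i(t_3)-g_i(t_{2n+3})$ would force the total drop over $[t_3,t_{2n+3}]$ to exceed itself, using only the monotonicity of $g_i$. Your explicit telescoping identity over all $n$ intervals is just a slightly more detailed version of the inequality the paper invokes directly.
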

\begin{proof}
Let $g_i$ be any basis function.
Assume that the lemma's hypothesis is not true, i.e., there exist at least two intervals $[t_{2l+1}, t_{2l+3}]$ and $[t_{2m+1}, t_{2m+3}]$ for some $l, m$ such that the lemma's equation holds for each of these intervals.
But since $g_i$ is non-increasing, this would necessitate that
\begin{align*}
g_i(t_3) - g_i(t_{2n+3}) &\ge \big[g_i(t_{2l+1}) - g_i(t_{2l+3})\big] + \big[g_i(t_{2m+1}) - g_i(t_{2m+3})\big] \\ &> g_i(t_3) - g_i(t_{2n+3})
\,,
\end{align*}
which completes the proof by contradiction.
\end{proof}

From \Cref{lemma:lowerbasis} and the pigeonhole principle (there exist $n$ odd-indexed intervals of the form $[t_{2l+1}, t_{2l+3}]$ for some $l\in\{1,\dots,n\}$, but only $n-1$ basis functions), we get that there exist at least one interval of the form $[t_{2l+1}, t_{2l+3}]$ (for some $l\in\{1,\dots,n\}$) such that for all $i\in\{1,2,\dots,n-1\}$,
\[
g_i(t_{2l+1}) - g_i(t_{2l+3})
\le
\frac{g_i(t_3) - g_i(t_{2n+3})}{2}
\,,
\]
and because $\cone(\calG)$ is finitely generated, it holds that for all $g\in\cone(\calG)$,
\begin{equation}
\label{eq:gsmalljump}
g(t_{2l+1}) - g(t_{2l+3})
\le
\frac{g(t_3) - g(t_{2n+3})}{2}
\,.
\end{equation}
Note that the interval is not the leftmost $[t_1, t_3]$ or the rightmost $[t_{2n+3}, t_{2n+5}]$ interval.

Consider the following specific $f_a\in\calF$:
\[
f_a(p) =
\begin{cases}
\fmax\,, &\text{for } \pmin \le p < t_{2l+2} \\
\fmin\,, &\text{for } t_{2l+2} \le p \le \pmax
\end{cases}
\,.
\]

Consider any $g\in\cone(\calG)$. We distinguish a few cases for the extreme values of $g$ outside of the outermost odd-indexed intervals, i.e., $g(t_3)$ and $g(t_{2n+3})$:
\begin{itemize}
    \item If $g(t_3) \ge \fmax + \frac{\fmax-\fmin}{4}$, then
    \[
    \int_{t_1}^{t_3} w(s) \left| f_a(s)-g(s) \right|^p ds
    \ge
    \frac{(\fmax-\fmin)^p}{(n+2)\cdot 4^p}
    \,.
    \]
    \item If $g(t_{2n+3}) \le \fmin - \frac{\fmax-\fmin}{4}$, then
    \[
    \int_{t_{2n+3}}^{t_{2n+5}} w(s) \left| f_a(s)-g(s) \right|^p ds
    \ge
    \frac{(\fmax-\fmin)^p}{(n+2)\cdot 4^p}
    \,.
    \]
    \item Otherwise, we have that $g(t_3) - g(t_{2n+3}) < \frac{3}{2} \left( \fmax-\fmin \right)$. We now distinguish 3 sub-cases:
    
    \begin{itemize}
        \item If $g(t_{2l+1}) \ge \fmax$, then $g(t_{2l+2}) \ge g(t_{2l+3}) \ge \frac{\fmax+3\fmin}{4}$ by \Cref{eq:gsmalljump}, thus
        \[
        \int_{t_{2l+2}}^{t_{2l+3}} w(s) \left| f_a(s)-g(s) \right|^p ds
        \ge
        \frac{(\fmax-\fmin)^p}{(n+2)\cdot 2^{1+2p}}
        \,.
        \]
        \item If $g(t_{2l+3}) \le \fmin$, then $g(t_{2l+2}) \le g(t_{2l+1}) \le \frac{3\fmax+\fmin}{4}$ by \Cref{eq:gsmalljump}, thus
        \[
        \int_{t_{2l+1}}^{t_{2l+2}} w(s) \left| f_a(s)-g(s) \right|^p ds
        \ge
        \frac{(\fmax-\fmin)^p}{(n+2)\cdot 2^{1+2p}}
        \,.
        \]
        \item Otherwise, for some $\delta_1, \delta_2 > 0$ we have that $\fmin < \fmin + \delta_2 = g(t_{2l+3}) \le g(t_{2l+1}) = \fmax - \delta_1 < \fmax$; then by \Cref{eq:gsmalljump} we get $\delta_1 + \delta_2 \ge \frac{\fmax-\fmin}{4}$, therefore
        \[
        \int_{t_{2l+1}}^{t_{2l+3}} w(s) \left| f_a(s)-g(s) \right|^p ds
        \ge
        \frac{\delta_1^p + \delta_2^p}{2(n+2)}
        \ge
        \frac{(\delta_1 + \delta_2)^p}{(n+2)\cdot 2^p}
        \ge
        \frac{(\fmax-\fmin)^p}{(n+2)\cdot 8^p}
        \,,
        \]
        where the second-to-last inequality follows from H\"older's inequality.
    \end{itemize}
\end{itemize}

Hence, we obtain the desired result:
\begin{align*}
\text{err}(\calG)
=
\sup_{f\in\calF} \left\{ \inf_{g\in\cone(\calG)} d(f,g) \right\}
&\ge \inf_{g\in\cone(\calG)} \left(
\int_\pmin^\pmax w(s) \left| f_a(s)-g(s) \right|^p ds
\right)^{1/p}
\\ &\ge
\eps \cdot \Omeg(\fmax - \fmin)
\,.
\end{align*}

\subsection{Proof of \Cref{thm:v3upper}}
\label{subsec:proofv3upper}

Let $\eps>0$, and consider ticks $t_i=\pmin (1+\delta)^i$ for $i\in\{0,1,\dots,n\}$ where $\log(1+\delta) = \eps^p \log(\pmax/\pmin)$, and $n=\log(\pmax/\pmin)/\log(1+\delta)$, so that $t_0 = \pmin$ and $t_n = \pmax$.
Consider the normalized weight function $w \colon [\pmin,\pmax] \to \Rp$ such that
$\int_\pmin^\pmax w(p) \, dp=1$, with the property that for some constant $C>0$, $\forall
i\in\{0,1,\dots,n-1\},\ \int_{t_i}^{t_{i+1}} w(p) \, dp \le \frac{C^p}{n}$.
Our Uniswap v3-like exchange mechanism $\calG=\cone(\calG)$ is described with the following $n+1$ basis functions: one basis function for each of the intervals $[t_i, t_{i+1}]$ for $i\in\{0,1,\dots,n-1\}$ defined by
\[
g_i(p) =
\begin{cases}
\frac{1}{\sqrt{t_i}} - \frac{1}{\sqrt{t_{i+1}}} \,, &\text{for } \pmin \le p\le t_i \\
\frac{1}{\sqrt{p}} - \frac{1}{\sqrt{t_{i+1}}} \,, &\text{for } t_i \le p \le t_{i+1} \\
0 \,, &\text{for } t_{i+1} \le p \le \pmax
\end{cases}
\,,
\]
along with the additional basis function $g_n(p)$ that is everywhere 1\footnote{Note that this additional basis function is always necessary whenever $\fmin\ne 0$ to obtain an \emph{arbitrarily good} approximation of any curve, due to the construction of the Uniswap curves to end at exactly 0 at the end of each interval.}.

Consider any $f\in\calF$, and define the following $g_f\in\cone(\calG)$ that will ``approximate'' this $f$:
\[
g_f(p) = f(\pmax) g_n(p) + \sum_{i=0}^{n-1} \frac{f(t_i) - f(t_{i+1})}{\frac{1}{\sqrt{t_i}} - \frac{1}{\sqrt{t_{i+1}}}} g_i(p)
\,.
\]

Then, it holds that
\[
\forall p\in(t_i, t_{i+1}),
\
|f(p)-g_f(p)| \leq f(t_i) - f(t_{i+1})
\,.
\]

Hence, we obtain the stated result by a similar argument to that of \Cref{subsec:proofupper}.

\section*{Acknowledgments} 
We would like to thank Neel Tiruviluamala for helpful comments on the proof of our lower bound.
We also thank anonymous reviewers of the Financial Cryptography and Data Security conference for useful suggestions.
The first author is supported in part by NSF awards CNS-2212745, CCF-2212233, DMS-2134059, and CCF-1763970.
The second author is supported by the Briger Family Digital Finance Lab at Columbia Business School.
The third author's research at Columbia University is supported in part by NSF awards CNS-2212745, and CCF-2006737.

\section*{Disclosures}
The first author is a Research Fellow with automated market making protocols, including ones mentioned in this work.
The second author is an advisor to fintech companies.
The third author is Head of Research at a16z crypto, which reviewed a draft of this article for compliance prior to publication and is an investor in various decentralized finance projects, including Uniswap, as well as in the crypto ecosystem more broadly (for general a16z disclosures, see \url{https://www.a16z.com/disclosures/}).

\printbibliography

\appendix

\section{Deferred Proofs of \Cref{subsec:model_props}}
\label{app:proofs_model_props}

\begin{proof}[Price discovery]
Assume that the current price of the exchange is $p_0 \ne p$. Suppose that an external market participant comes to the exchange and is willing to trade to some price $p_1$, and then uses the external market to trade back. We prove that the maximum profits will be obtained at $p_1=p$; therefore, if the trader does not maximize their profits, other external market participants will continue to have an incentive to trade until the price of the exchange is $p$ and the conclusion follows.

Due to \Cref{eq:trade_numeraire}, the external market participant's optimization problem for their profit is:
\begin{equation*}
\max_{p_1\in\Rp} p (g(p_0) - g(p_1)) + \int_{p_0}^{p_1} pdg(p)
= \max_{p_1\in\Rp} (p_1 - p) g(p_1) - \int_{0}^{p_1} g(p)dp
\end{equation*}

First-order conditions then prove that the optimum is attained at $p_1=p$.
\end{proof}

\begin{proof}[Budget balance]
Assume that the current price of the exchange is $p_0$.
First, we note that liquidity additions and removals, due to the linear nature of the aggregate demand curves and the num\'eraire contributed/removed by \Cref{eq:lp_numeraire_deposit} with respect to the curves $g_i(p)$, do not affect the rest of the joint pool, i.e., if the amount of num\'eraire was non-negative before the operation, so it is after it.
Trading is the only action which is yet unclear how it affects the amount of num\'eraire in the pool.
In aggregate, the joint pool contains a quantity $g(p_0)$ of risky asset, and in num\'eraire by \Cref{eq:lp_numeraire_deposit}:
\[
\sum_{i=1}^n -\int_0^{p_0} pdg_i(p)
= -\int_0^{p_0} pdg(p)
\ge 0
\,,
\]
because $g$ is non-increasing (as the sum of non-increasing functions) and $p_0\ge 0$.
Suppose that a trader comes and moves the pool price to $p_1$. The new amount of num\'eraire contained in the pool by the above equation and \Cref{eq:trade_numeraire} is
\[
-\int_0^{p_0} pdg(p) - \int_{p_0}^{p_1} pdg(p)
= -\int_0^{p_1} pdg(p)
\ge 0
\,,
\]
thereby completing our argument.
\end{proof}

\end{document}